\newcommand{\R}{\mathbb{R}}
\newcommand{\tanDelta}{T_{\bf{e_a}}\Delta_{N}^0}
\newcommand{\simplex}{\Delta_N^0}
\newcommand{\Expa}{\mathrm{Exp}_a}
\newcommand{\Loga}{\mathrm{Log}_a}
\newcommand{\Log}{\mathrm{Log}}
\newcommand{\Exp}{\mathrm{Exp}}
\newcommand{\plus}{\oplus}
\newcommand{\ori}{\odot}
\newcommand{\orifa}{\odot_{_{\fa }}}
\newcommand{\plusfa}{\oplus_{_{\fa}}}
\newcommand{\minusfa}{\ominus_{\fa}}
\newcommand{\fa}{\mathcal{C}_{\bf a }}
\newcommand{\fu}{\mathcal{C}_{\bf 1 }}
\newcommand{\fap}{\mathcal{C}_{\bf a'}}
\newcommand{\Logfa}{\mathrm{Log}_{_{\fa }}}
\newcommand{\Expfa}{\mathrm{Exp}_{_{\fa}}}
\newcommand{\plusunu}{\oplus_{_{\fu }}}
\newcommand{\oriunu}{\odot_{_{\fu}}}
\newcommand{\Logfunu}{\mathrm{Log}_{_{\fu}}}
\newcommand{\Expfunu}{\mathrm{Exp}_{_{\fu}}}
\newcommand{\distfa}{\mathrm{d}_{_{\fa}}}
\newcommand{\distfunu}{\mathrm{d}_{_{\fu}}}
\newcommand{\alg}{\mathrm{A}_{k}}
\newcommand{\sub}{\mathrm{Sub}}
\newcommand{\bl}{\boldsymbol\lambda}
\newcommand{\bx}{{\bf x}}
\newcommand{\tax}{t_{\bf a}({\bf x})}
\DeclareMathOperator*{\argmin}{arg\min}
\DeclarePairedDelimiter\ps{\langle}{\rangle}
\DeclarePairedDelimiter\psa{\langle}{\rangle_{\fa}}
\newtheorem{lemma}{Lemma}
\newtheorem{theorem}{Theorem}
\newtheorem{definition}{Definition}
\newtheorem{proposition}{Proposition}
\title{Parametric Lie group structures on the probabilistic simplex and generalized Compositional Data}
\date{\today}
\author{ Petre Birtea$^1$, Ioana Gavra$^{2}$   
\\  $^{1}$Departement of Mathematics, West University of Timișoara \\ $^{2}$  IRMAR (UMR CNRS 6625), Universit\'e de Rennes }
\begin{document}

	\maketitle
	
	\begin{abstract}
In this paper we build a set of parametric quotient Lie group structures on the probabilistic simplex that can be extended to real vector space structures. In particular, we rediscover the main mathematical objects generally used when treating compositional  data as elements associated to the quotient Lie group with respect to the equivalence relation induced by the scale invariance principle. This perspective facilitates the adaptation of the statistical methods used for classical compositional data to data that follows a more general equivalence relation. 
	
	\end{abstract}
\noindent \textbf{Keywords:} Quotient Lie groups, probabilistic simplex, Compositional Data, parametric distances, Fréchet means, principal components analysis. 	\\
\noindent \textbf{MSC classification:} 22E15, 62H
	\section{Introduction}
	
The unit simplex can be regarded as the set of discrete probability distributions for random variables with a fixed finite number of possible values. It is also used as a model space for Compositional Data analysis (\cite{Aitchinson2001},\cite{GreenacreBook}), for word frequencies in text classification (\cite{textClass}), portfolio selection problems (\cite {port-sel}) and a myriad of other problems.
From a geometric point of view, it can be regarded as a constrained sub-manifold of the open half-cone.

	The open half-cone $\mathbb{R}^{N+1}_{>0}=\left\{{\bf x}\in \mathbb{R}^{N+1}|~ x_{i}>0\right\}$ can be endowed with an Abelian Lie group structure
	\begin{equation*}
	{\bf x}\plus {\bf y}=(\dots,x_iy_i,\dots),
	\end{equation*}
	with neutral element ${\bf 1}=(\dots,1,\dots)\in \mathbb{R}^{N+1}_{>0}$.
	
	The Lie group exponential map is given by the {\bf global diffeomorphism} $\Exp:\mathbb{R}^{N+1} \rightarrow \mathbb{R}^{N+1}_{>0}$,
	
	\begin{equation*}
	\Exp({\boldsymbol \xi})=(\dots,e^{\xi_{i}},\dots)
	\end{equation*}
	and its inverse, the logarithmic map  ${\bf \Log}:\mathbb{R}^{N+1}_{>0}\rightarrow\mathbb{R}^{N+1}$,
	
	\begin{equation*}
	{\Log}({\bf x})=(\dots,\ln x_i,\dots).
	\end{equation*}
	
	Using the global diffeomorphism property of ${\bf \Exp}$ and ${\bf \Log}$, we can introduce a scalar multiplication on $\mathbb{R}^{N+1}_{>0}$ as follows,
	
	\begin{equation}\label{inmultire cu scalar}
	c\ori {\bf x}:=\Exp(c {\Log}(\bf x)).
	\end{equation}
	Writing in coordinates, we have $$c\ori {\bf x}=(\dots,{x_i}^c,\dots),$$ for all $c\in \mathbb{R}$. The set $\mathbb{R}^{N+1}_{>0}$ endowed these two operations becomes a real vector space
		$(\mathbb{R}^{N+1}_{>0}, \plus, \ori).$

		The probabilistic open simplex 
		$$ \simplex=\{ {\bf x}\in \R^{N+1}_{>0}\ | \sum_{i=1}^{N+1} x_i =1 \}$$
	is a constraint manifold, which also has an Abelian group structure :
	
		\begin{equation*}
	{\bl} * {\boldsymbol{\mu}}=\frac{1}{\sum_{i=1}^{N+1} \lambda_i\mu_i}(\dots,\lambda_i \mu_i,\dots).
	\end{equation*}
	This group operation, also called perturbation, lays at the core of the Compositional Data analysis methods. It was also used together with the Fisher Riemannian metric on the simplex in the problem of metric learning for text  classification in \cite{textClass}.\\
	
	A first observation is that $(\simplex,*)$ is not a subgroup of	$(\mathbb{R}^{N+1}_{>0}, \plus).$ A natural question that arises is {\bf how can one geometrically explain the group operation $*$ in relation with the group operation $\oplus$}. 
	
	We prove that $(\simplex,*)$ can be regarded as a quotient Lie group structure of $(\mathbb{R}^{N+1}_{>0}, \plus)$ relative to the one-dimensional subgroup $H=\left\{\Exp(t{\bf 1})=\right.$ $\left. (\dots,e^{t},\dots)|~  t\in \mathbb{R}\right\}$. Furthermore, this structure can be extended to a vector space one.  This allows, for the first time to give a Lie group explanation for several of the main objects encountered in Compositional Data analysis. For example, the Centered-Log ratio transformation is non other than the Lie logarithmic map of the quotient structure $(\simplex,*)$ (for details, see sub-section \ref{sec: composition}). It also facilitates a Lie group interpretation to the famous Boltzmann distribution, also known as softmax function in the machine learning community.\\
	In section \ref{sec: 2}, we give a follow up of the above constructions to the case where the Lie group structure and the associated objects depend on a parameter. In particular we obtain a parametric family of distances on the simplex. Parametric families of distances are an important tool in metric learning problems (\cite{metricLearningBrian}, \cite{metricLearningXing}).  Our idea is to embed this parameter dependence into the family of one-dimensional subgroups, with respect to which we construct the quotient structure on the probabilistic simplex.
	
	In Section \ref{sec : 3}, we detail the link between the quotient Lie group structure induced by $H$ and the Aitchison geometry. Sometimes the nature of the data might require a different quotient geometry, like for example data that comes from exponential decay phenomena \cite{usesandmisuses}. In    Section \ref{sec:4}, we extend the main principles of Compositional Data analysis to this more general framework. Finally, in Section \ref{sec:5}, we present the adaptation of some basic objects of descriptive statistics and of the multivariate normal distribution to the probabilistic simplex endowed with this new geometric structures.

		\section{The probabilistic simplex as a quotient Lie group}\label{sec: 2}
	From general theory of Lie groups,  we have that any one-dimensional subgroup is of type

	\begin{equation}\label{defH}
	H_{\bf a}=\left\{\Exp(t{\bf a})=(\dots,e^{ta_i},\dots)|~  t\in \mathbb{R}\right\},\quad \mbox{ with } {\bf a}\in \mathbb {R}^{N+1}.
	\end{equation}
	It is easy to see that those are also all the one-dimensional linear subspaces  of $(\mathbb{R}^{N+1}_{>0}, \plus, \ori)$. 
	The one-dimensional subgroup $H_{\bf a}$ is a normal subgroup of the Lie group $(\mathbb{R}^{N+1}_{>0},\plus)$ and consequently, we obtain the quotient Lie group $\mathbb{R}^{N+1}_{>0}/H_{\bf a}$, with $\pi_{\bf a}:\mathbb{R}^{N+1}_{>0}\rightarrow\mathbb{R}^{N+1}_{>0}/H_{\bf a}$ being the projection map.
	
	The quotient group structure is defined by 
	
	$$[\bf x]_{\bf a}\diamond_{\bf a}[\bf y]_{\bf a}:=\pi_{\bf a}({\bf x}\plus{\bf y}).$$
	and we have the following commutative diagram: 
	
	$$ \xymatrix{
		\mathbb{R}^{N+1}_{>0}  \ar@<1ex>[rr]^*{\Log} \ar[d]^*{\pi_{\bf a}}	&& T_{\bf 1}\mathbb{R}^{N+1}_{>0}\ar[ll]^*{\Exp} \ar[d]^*{D\pi_{\bf a}({\bf 1)}} \\
		\mathbb{R}^{N+1}_{>0}/H_{\bf a} \ar@<1ex>[rr]^*{\Log_{\bf a}} && T_{[{\bf 1}]}\left( \mathbb{R}^{N+1}_{>0}/H_{{\bf a}} \ar[ll]^*{\Exp_{\bf a}}\right) }$$
	where  $ T_{[{\bf 1}]}\left( \mathbb{R}^{N+1}_{>0}/H_{{\bf a}}\right)$ is the Lie algebra corresponding to the quotient Lie group $\mathbb{R}^{N+1}_{>0}/H_{\bf a}$ . The maps $\Loga$ and $\Expa $ are the exponential and logarithmic maps associated with the quotient group structure. In this case they are also global diffeomorphisms.
	Furthermore, the quotient group $\mathbb{R}^{N+1}_{>0}/H_{\bf a}$ can be endowed with a scalar multiplication as in (\ref{inmultire cu scalar}) and the corresponding vector structures are compatible with  $\Loga$ and $\Expa $, in the sense that these two become linear maps.
	
	\medskip
	
	Next, we show that for certain choices of $\bf a$, the simplex $\Delta_{N}^0$  \textbf {can be regarded as a model space for the quotient group} $(\mathbb{R}^{N+1}_{>0}/H_{\bf a},\diamond_{\bf a})$. 
	
	The classes defined by the equivalence relation induced by the subgroup $H_{\bf a}$ are the smooth curves ${\bf{x}} H_{\bf a}=\left\{{\bf x}\plus \Exp(t{\bf a})=(\dots,x_{i}e^{ta_i},\dots)|~  t\in \mathbb{R}\right\}$,  $\ {\bf x}\in \mathbb {R}^{N+1}_{>0}$ and $\pi_{\bf a}({\bf{x}} H_{\bf a})=[\bf x]_{\bf a}$.
	
	In order to conclude, we need the following result.
	
	\begin{lemma}
		Let ${\bf a} \in \mathbb{R}^{N+1}$ and $H_{\bf a}$ the associated subgroup  defined in \eqref{defH}. The following statements are equivalent:  
		\begin{itemize}
			\item $\Delta_{N}^0 $ intersects $ {\bf{x}} H_{\bf a}$ in a unique point for each ${\bf x} \in \mathbb{R}_{>0}^{N+1}$ 
			\item $a_i>0$, for all $0\le i\le N+1$ or $a_i<0$, for all $0\le i\le N+1$.
		\end{itemize}
		
	\end{lemma}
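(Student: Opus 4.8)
The plan is to reduce the geometric statement to the analysis of a single scalar equation. Fix $\mathbf{x}\in\mathbb{R}^{N+1}_{>0}$ and parametrize its class by $t\mapsto(\dots,x_ie^{ta_i},\dots)$; a point of $\mathbf{x}H_{\mathbf{a}}$ lies on $\Delta_N^0$ precisely when $f_{\mathbf{x}}(t):=\sum_{i=1}^{N+1}x_ie^{ta_i}=1$. When $\mathbf{a}\neq\mathbf{0}$ at least one coordinate of this curve is strictly monotone in $t$, so the parametrization is injective and counting intersection points is the same as counting solutions $t\in\mathbb{R}$ of $f_{\mathbf{x}}(t)=1$. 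The whole lemma thus becomes a question about the number of roots of $f_{\mathbf{x}}-1$, and the two derivatives $f_{\mathbf{x}}'(t)=\sum_i a_ix_ie^{ta_i}$ and $f_{\mathbf{x}}''(t)=\sum_i a_i^2x_ie^{ta_i}$ (the latter $\geq 0$, so $f_{\mathbf{x}}$ is convex) will do all the work.

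For the implication from the sign condition to uniqueness, suppose all $a_i>0$ (the case all $a_i<0$ is symmetric). Then $f_{\mathbf{x}}'>0$, so $f_{\mathbf{x}}$ is strictly increasing; since $e^{ta_i}\to 0$ as $t\to-\infty$ and $e^{ta_i}\to+\infty$ as $t\to+\infty$, the function $f_{\mathbf{x}}$ is a continuous strictly increasing bijection from $\mathbb{R}$ onto $(0,\infty)$. Hence $f_{\mathbf{x}}(t)=1$ has exactly one solution, giving existence and uniqueness of the intersection point simultaneously, for every $\mathbf{x}$.

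For the converse I would argue by contrapositive: assuming the sign condition fails (so there is an index with $a_i\leq 0$ and an index with $a_j\geq 0$), I would exhibit an $\mathbf{x}$ for which the intersection is not a single point. This splits into three cases. If $\mathbf{a}=\mathbf{0}$ the class is the single point $\{\mathbf{x}\}$, and any $\mathbf{x}$ with $\sum_i x_i\neq 1$ misses $\Delta_N^0$. If some $a_i<0$ and some $a_j>0$, then $f_{\mathbf{x}}\to+\infty$ at both ends, so by convexity it attains a positive minimum $m_{\mathbf{x}}$; since $f_{s\mathbf{x}}=sf_{\mathbf{x}}$, rescaling $\mathbf{x}\mapsto s\mathbf{x}$ rescales the minimum to $sm_{\mathbf{x}}$, and taking $s$ large (minimum $>1$, no root) or small (minimum $<1$, two roots) destroys uniqueness. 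The remaining case has exactly one strict sign present together with at least one zero entry, say all $a_i\geq 0$ with some $a_i=0$ and some $a_j>0$; then $f_{\mathbf{x}}$ is strictly increasing with horizontal asymptote $c_0=\sum_{a_i=0}x_i>0$ as $t\to-\infty$, so $f_{\mathbf{x}}>c_0$ everywhere and choosing $\mathbf{x}$ with $c_0\geq 1$ leaves no root.

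I expect the reverse direction to be the main obstacle, specifically the bookkeeping of the negation of the sign condition. The clean convexity-plus-minimum argument applies only when $f_{\mathbf{x}}$ blows up on both sides, i.e. when strict signs of both kinds are present; the subcases containing zero entries are monotone rather than coercive, and there one must instead use the strictly positive horizontal asymptote to rule out intersection. Making sure these degenerate subcases are all covered, and that a single concrete $\mathbf{x}$ suffices in each, is the delicate part.
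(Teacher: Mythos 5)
Your proof is correct. The sufficiency direction (sign condition implies unique intersection) is exactly the paper's argument: strict monotonicity of $t\mapsto\sum_i x_ie^{a_it}$ with image $(0,\infty)$. For the necessity direction you take a genuinely different route. The paper negates the sign condition into just two cases --- some $a_j=0$, or two entries of opposite strict sign --- and disposes of both with the single witness $\mathbf{x}=(1,\ldots,1)$, observing that in either case $\sum_i e^{a_it}>1$ for all $t$, so the intersection is \emph{empty}. Your version splits into three cases ($\mathbf{a}=\mathbf{0}$; mixed strict signs; one strict sign plus a zero entry) and builds a tailored witness in each, using convexity and the homogeneity $f_{s\mathbf{x}}=sf_{\mathbf{x}}$ in the mixed-sign case and the positive horizontal asymptote in the zero-entry case. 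The paper's argument is shorter and its uniform witness is elegant; yours is more informative, since it shows that in the mixed-sign case uniqueness can fail in both possible ways (no intersection for large scalings, two intersection points for small ones), and you explicitly justify that counting intersection points amounts to counting roots of $f_{\mathbf{x}}(t)=1$ via injectivity of the parametrization when $\mathbf{a}\neq\mathbf{0}$ --- a point the paper leaves implicit. Both case decompositions of the negated sign condition are exhaustive, so there is no gap on either side.
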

	\begin{proof}
		
		For all $\bf x \in \mathbb{R}^{N+1}_{>0}$,  
		\begin{equation*}\label{intersection}
		{\bf{x}} H_{\bf a}\cap \Delta_N^0=\{(\dots,x_{i}e^{ta_i},\dots) \ | \ t \in \mathbb{R} \mbox{ s.t. }\sum_{i=1}^{N+1} x_ie^{ta_i}=1 \}.
		\end{equation*}
		Assume that all the coordinates $a_i$ of the vector $\bf a$ are strictly positive.
		Then, for any $\bf x \in \mathbb{R}^{N+1}_{>0}$  the intersection ${\bf{x}} H_{\bf a}\cap \Delta_N^0$ is a singleton  since  $ \mathbb{R} \ni t\mapsto \sum_{i=1}^{N+1} x_ie^{a_it}$ is a strictly monotone continuous function whose image is $(0,+\infty)$. The same type of reasoning applies for $a_i<0, i=\overline{1,N+1}$.
		
		To see that this condition is also necessary to ensure that each equivalence class intersects the simplex in a unique point, first observe that if there exists $a_j=0$, then $\sum_{i=1}^{N+1}e^{a_it}>1$, for all $t \in \mathbb{R}$ and thus, for $\textbf{x} =(1,\ldots,1)$, we obtain ${\bf x}H_{\bf a}\cap \Delta_{N}^0 =\emptyset$.\\
		Next, suppose that there exists $i\ne j$ such that $a_i<0$ and $a_j>0$. Then for all $t \in \mathbb{R}^{\star}$ either $e^{a_it}>1$ or $e^{a_jt}>1$ which implies $e^{a_jt}+e^{a_it}>1$, for all $t \in \mathbb{R}.$
		Therefore, we can conclude that if there exist $i\ne j$ such that $a_i<0$ and $a_j>0$ then once more, for $\textbf{x} =(1,\ldots,1)$, we get ${\bf x}H_{\bf a}\cap \Delta_{N}^0=\emptyset $. \\
	\end{proof}

	The above Lemma allows us to define the map $\fa:\mathbb{R}^{N+1}_{>0}\rightarrow \Delta_{N}^0$, ${\bf x}\mapsto \fa({\bf x}):= {\bf x}H_{\bf a}\cap \Delta_{N}^0$,  the unique point of intersection between the class ${\bf x}H_{\bf a}$ and the open simplex $\Delta_{N}^0$. This in turn will define the bijection $s_{\bf a}:\mathbb{R}^{N+1}_{>0}/H_{\bf{a}}\rightarrow \Delta_{N}^0$ and we have the following commutative diagram
	
	$$ \xymatrix{
		\mathbb{R}^{N+1}_{>0} \ar[d]_*{\pi_{\bf{a}}} \ar[r]^*{\fa}	& \Delta_{N}^0 \subset \mathbb{R}^{N+1}_{>0}\\
		\mathbb{R}^{N+1}_{>0}/H_{\bf{a}}\ar[ru]_*{s_{\bf{a}}} & }$$
	
	We endow $\Delta_{N}^0$ with the group structure that makes the bijection $s_{\bf a}$ a homomorphism, i.e. 
	$$\boldsymbol{\lambda}_1\oplus_{\bf a}\boldsymbol{\lambda}_2:=s_{\bf a}([\boldsymbol{\lambda}_1]_{\bf a}\diamond_{\bf a} [\boldsymbol{\lambda}_2]_{\bf a})=\fa(\boldsymbol{\lambda}_1\oplus\boldsymbol{\lambda}_2),$$
	where in the right side we regard $\boldsymbol{\lambda}_1$ and $\boldsymbol{\lambda}_2$ as points in $\Delta_{N}^0\subset\mathbb{R}^{N+1}_{>0}$. In other words, to obtain the composition of two elements $\boldsymbol{\lambda}_1$ and $\boldsymbol{\lambda}_2$ in the open simplex, 
	we regard $\boldsymbol{\lambda}_1$ and $\boldsymbol{\lambda}_2$ as elements of the ambient group $(\mathbb{R}^{N+1}_{>0},\plus)$ and compute the intersection of the class $(\boldsymbol{\lambda}_1\plus\boldsymbol{\lambda}_2)H_{\bf a}$ with $\Delta_{N}^0$.
	
	\medskip 
	
	In order to render some of the above constructions in a more explicit form, we need to give the analytical construction of the map $\fa$. By definition, $\fa$ can be constructed as 
	\\
	\begin{equation}\label{def fa}
	\fa({\bf x})=h_{\bf x, \bf a}(t_{\bf a}(\bf x)),
	\end{equation}
	\\
	where the smooth curve $ h_{\bf x, \bf a}(t)=(\dots,x_{i}e^{a_it},\dots)$ defines the class ${\bf{x}} H_{\bf a}$ and $t_{\bf a}(\bf x)\in \mathbb{R}$ is the solution of the equation
	\begin{equation}\label{implicit eq}
	\sum_{i=1}^{N+1} x_{i}e^{a_it}=1.
	\end{equation}
	
	In some particular cases, we can solve the above equation for $t$ but considering a general vector ${\bf a}$ with strictly positive/negative coordinates, the above equation is impossible to solve analytically.  
	
	Nevertheless, in order to compute the exponential and logarithm map for the quotient Lie group $(\Delta_{N}^0,\plusfa)$, we need only the derivative of the homomorphism $\fa:(\mathbb{R}^{N+1}_{>0},\oplus) \longrightarrow (\Delta_{N}^0,\plusfa)$ computed in the neutral element ${\bf 1}$. More precisely,
	$$
	(D_{{\bf x}}\fa)_{i,j}({\bf 1})=\frac{\partial (x_{i}e^{a_it_{\bf a}(\bf x)})}{\partial x_j}({\bf 1})=\delta_{i,j}e^{a_it_{\bf a}(\bf 1)}+a_{i}\frac{\partial t_{\bf a}}{\partial x_{j}}({\bf 1})e^{a_it_{\bf a}(\bf 1)},
	$$
	where $e^{a_it_{\bf a}(\bf 1)}$ is the $i^{\text{th}}$ coordinate of the neutral element ${\bf e}_{{\bf a}}$ of the quotient Lie group $(\Delta_{N}^0,\plusfa)$ and $\dfrac{\partial t_{\bf a}}{\partial x_{j}}(\bf 1)$ is computed using The Implicit Function Theorem applied to equation \eqref{implicit eq} as follows, 
	
	$$
	\frac{\partial t_{\bf a}}{\partial x_{j}}({\bf 1})=-\frac{1}{\sum_{1}^{N+1}a_k({\bf e_{\bf a}})_k}({\bf e}_{\bf a})_{j}.
	$$
	
	Consequently, for the derivative of the homomorphism  $\fa:\mathbb{R}^{N+1}_{>0} \longrightarrow \Delta_{N}^0$ computed in the neutral element ${\bf 1}$, we have the formula
	
	\begin{equation}\label{formula Dx}
	(D_{{\bf x}}\fa)_{i,j}({\bf 1})=\delta_{i,j}({\bf e}_{\bf a})_{i}-\frac{a_i}{\sum_{k=1}^{N+1}a_k({\bf e_{\bf a}})_k}({\bf e}_{\bf a})_{i}({\bf e}_{\bf a})_{j}.
	\end{equation}

	\bigskip
	
	The homomorphism $\fa:\mathbb{R}^{N+1}_{>0} \longrightarrow \Delta_{N}^0$ makes the following diagram commutative, \textcolor{black}{see Appendix},

	\begin{equation}\label{diagrama fa}
	\xymatrix{
		T_{\bf 1}	\mathbb{R}^{N+1}_{>0}  \ar[rr]^*{D_{{\bf x}}\fa(1)} \ar@<1ex>[d]^*{\Exp}	&& T_{\bf{e_a}}\Delta_{N}^0 \ar[d]^*{\Expfa} \\
		(\mathbb{R}^{N+1}_{>0},\plus)\ar[u]^*{\Log} \ar[rr]^*{\fa} && (\Delta_{N}^0, \plusfa)\ar@<1ex>[u]^*{\Logfa}}
	\end{equation}
	
	\noindent where the Lie algebras are given by $T_{\bf 1}	\mathbb{R}^{N+1}_{>0}=\mathbb{R}^{N+1}$ and  $T_{\bf{e_a}}\Delta_{N}^0 =\{ \boldsymbol{ \xi} \in \mathbb{R}^{N+1} \  | \displaystyle\sum_{i=1}^{N+1} \xi_i=0 \}.$ As in the case of the Lie group $(\mathbb{R}^{N+1}_{>0},\plus)$, we obtain for the Lie group $(\Delta_{N}^0, \plusfa)$ that $\Expfa$ and $\Logfa$ are also {\bf global diffeomorphisms}.

	\medskip

	The commutativity of the above diagram leads \textcolor{black}{directly} to an explicit formula for $\Logfa$ but not for $\Expfa$.
	\begin{theorem}\label{thm: log}
		The logarithm map $\Logfa: \Delta_{N}^0 \rightarrow T_{\bf{e_a}}\Delta_{N}^0$ of the quotient Lie group structure on $\Delta_{N}^0 $ induced by the subgroup $H_{\bf a}$ has the formula :
		
		$$\Logfa(\boldsymbol{\lambda})=\left(\dots, ({\bf e}_{\bf a})_{i}\left( \ln(\lambda_i)-\frac{a_i}{\sum_{k=1}^{N+1}a_k({\bf e_{\bf a}})_k}\sum_{j=1}^{N+1}({\bf e}_{\bf a})_{j}\ln(\lambda_j)\right),\dots\right),$$
		where ${\bf e_{\bf a}}=\fa({\bf 1})$ is the neutral element of $(\Delta_{N}^0,\plusfa)$.
	\end{theorem}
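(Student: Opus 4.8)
The plan is to read the formula directly off the commutativity of diagram \eqref{diagrama fa}, together with the single extra observation that $\fa$ restricts to the identity on the simplex. First I would extract from \eqref{diagrama fa} the intertwining relation between the two logarithmic maps. The square built from the downward exponentials expresses precisely that $\fa$ is a Lie group homomorphism whose differential at the neutral element intertwines the exponential maps, that is,
$$\fa\circ\Exp = \Expfa\circ D_{{\bf x}}\fa({\bf 1}).$$
Since $\Exp,\Log$ and $\Expfa,\Logfa$ are mutually inverse global diffeomorphisms, composing on the left with $\Logfa$ and on the right with $\Log$ yields
$$\Logfa\circ\fa = D_{{\bf x}}\fa({\bf 1})\circ\Log.$$

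Second, I would observe that $\fa$ fixes every point of the simplex. For $\bl\in\simplex$ the defining equation \eqref{implicit eq} reads $\sum_{i=1}^{N+1}\lambda_i e^{a_i t}=1$, which holds at $t=0$ because $\sum_{i=1}^{N+1}\lambda_i=1$; by the uniqueness of the intersection established in the Lemma, $\tal=0$ and hence $\fa(\bl)=h_{\bl,{\bf a}}(0)=\bl$. Restricting the intertwining relation to $\simplex$ therefore collapses to the clean identity
$$\Logfa(\bl) = D_{{\bf x}}\fa({\bf 1})\bigl(\Log(\bl)\bigr),$$
so that $\Logfa$ on the simplex is simply the linear map $D_{{\bf x}}\fa({\bf 1})$ applied to the coordinate-wise logarithm $\Log(\bl)=(\dots,\ln\lambda_i,\dots)$.

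Finally, I would carry out the matrix--vector product using the explicit expression \eqref{formula Dx} for $D_{{\bf x}}\fa({\bf 1})$. Taking the $i$-th component, the diagonal term $\delta_{i,j}({\bf e}_{\bf a})_i$ contributes $({\bf e}_{\bf a})_i\ln\lambda_i$, while the rank-one correction contributes $-\dfrac{a_i}{\sum_{k=1}^{N+1}a_k({\bf e_{\bf a}})_k}({\bf e}_{\bf a})_i\sum_{j=1}^{N+1}({\bf e}_{\bf a})_j\ln\lambda_j$; factoring out $({\bf e}_{\bf a})_i$ reproduces exactly the claimed formula.

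I expect no genuine obstacle: once the intertwining relation and the fact $\fa|_{\simplex}=\mathrm{id}$ are in place, the statement is a one-line substitution followed by a routine expansion. The only points demanding a little care are the bookkeeping of which composition of inverse diffeomorphisms converts the ``$\Exp$'' square into the ``$\Log$'' relation, and the verification (via the Lemma) that $\fa$ truly restricts to the identity on $\simplex$ rather than to some nontrivial reparametrization, since this is what removes the implicitly-defined quantity $\tal$ from the final expression.
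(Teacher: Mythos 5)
Your proposal is correct and follows the paper's own proof exactly: the paper likewise deduces $\Logfa(\boldsymbol{\lambda})=(D_{{\bf x}}\fa)({\bf 1})\cdot\Log(\boldsymbol{\lambda})$ from the commutative diagram together with the observation that $\fa$ restricts to the identity on $\simplex$, and then expands using \eqref{formula Dx}. You merely supply more detail (the verification that $t_{\bf a}(\boldsymbol{\lambda})=0$ via the Lemma, and the inversion of the exponential square into the logarithmic relation) than the paper's terse two-line argument.
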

	
	\begin{proof} Note that, for ${\bf \boldsymbol{\lambda}}\in \Delta_{N}^0$, we have $\fa({\bf \boldsymbol{\lambda}})={\bf \boldsymbol{\lambda}}$, and consequently,
		$$\Logfa(\boldsymbol{\lambda})=(D_{{\bf x}}\fa)({\bf 1})\cdot \Log(\boldsymbol{\lambda}).$$
		By a straight forward computation, we obtain the announced formula. 
	\end{proof}
	
	The map $\Logfa$ can be seen as the representation of $\Loga$ taking the open simplex as a model space for the quotient $\mathbb{R}^{N+1}_{>0}/H_{\bf a}$ and using the identification map $\fa$.

	\textcolor{black}{In order to compute $\Expfa(\boldsymbol{\xi})$, for $\boldsymbol{\xi}\in  T_{\bf{e_a}}\Delta_{N}^0 $, we can proceed as follows: choose a vector $\bold{v}_{\boldsymbol{\xi}}\in \mathbb{R}^{N+1}$ such that $ D_{{\bf x}} \fa (1) \bold{v}_{\boldsymbol{\xi}}=\boldsymbol{\xi}$, then $$\Expfa(\boldsymbol{\xi})=\fa (\Exp(\bold{v}_{\boldsymbol{\xi}})).$$}

	\medskip
Next, we extend the Abelian Lie group $(\simplex,\plusfa)$  to a vector space structure by defining the following scalar multiplication.
	
	\begin{definition} Let $c\in\mathbb{R}$ and $\boldsymbol{\lambda}\in \Delta_N^0$. The scalar multiplication $\orifa$ of $\boldsymbol{\lambda}$ by $c$ is defined as:
		$$c\orifa\boldsymbol{\lambda}:= \Expfa(c \Logfa \boldsymbol{\lambda})$$
	\end{definition}

	\noindent
	An easy consequence of this definition is that : \begin{itemize}
		\item $\Logfa(c\orifa\boldsymbol{\lambda})=c \Logfa(\boldsymbol{\lambda})$, for all $ \boldsymbol{\lambda} \in \simplex$  and $c\in \R$
		\item $c\orifa \Expfa({\boldsymbol{\xi}})=\Expfa(c\boldsymbol{\xi})$, for all $ \boldsymbol{\xi} \in \tanDelta$  and $c\in \R$
	\end{itemize}
	
	\noindent To compute the  scalar multiplication $\orifa$, an explicit expression for $\Expfa$ is not needed. Going back to the definition of $\Logfa$, we have that for any $\boldsymbol{\lambda} \in \simplex$, $ \Logfa(\boldsymbol{\lambda})= D_{\mathbf{x}}\fa(1)\Log (\boldsymbol{\lambda})$.  The commutativity of the diagram, implies that for any ${\bf v} \in T_1\R_{>0}^{N+1}$, $$\Exp_a(D_{\mathbf{x}}\fa(1){\bf v})= \fa(\Exp({\bf v}))$$
	and thus 
	\begin{align*}
	c\orifa\boldsymbol{\lambda}&= \Expfa(c \Logfa \boldsymbol{\lambda})= \Expfa\left(c D_{\mathbf{x}}\fa(1)\Log (\boldsymbol{\lambda})\right)\\
	&=\fa(\Exp(\Log(c\ori\boldsymbol{\lambda})))=\fa((\lambda_1^c,\ldots,\lambda_{N+1}^c)).
	\end{align*}

	\begin{proposition}\label{prop : isometry} The simplex $\Delta_N^0$ endowed with the group composition $\plusfa$ and the scalar multiplication $\orifa$ forms a $\R$-vector space.  Moreover,  $\Expfa$ and $\Logfa$ are linear isomorphisms between $(\tanDelta,+,\cdot)$ and  $(\simplex,\plusfa,\orifa)$.
	\end{proposition}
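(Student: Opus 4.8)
The plan is to obtain the vector-space structure on $\simplex$ by transporting the standard one from the Lie algebra along the bijection $\Logfa$. Recall that $\tanDelta=\{\boldsymbol\xi\in\R^{N+1}\mid\sum_i\xi_i=0\}$ is a linear subspace of $\R^{N+1}$, hence already a real vector space under the usual $+$ and $\cdot$. Since $\orifa$ is \emph{defined} through $\Expfa$ and $\Logfa$, the whole statement follows once I show that $\Logfa:\simplex\to\tanDelta$ (equivalently its inverse $\Expfa$) intertwines the two sets of operations, that is, that it is both additive, $\Logfa(\boldsymbol\lambda_1\plusfa\boldsymbol\lambda_2)=\Logfa(\boldsymbol\lambda_1)+\Logfa(\boldsymbol\lambda_2)$, and homogeneous, $\Logfa(c\orifa\boldsymbol\lambda)=c\,\Logfa(\boldsymbol\lambda)$.

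Homogeneity is already available: it is the first bullet point recorded right after the definition of $\orifa$, and it is immediate from $c\orifa\boldsymbol\lambda=\Expfa(c\Logfa\boldsymbol\lambda)$ together with $\Logfa=\Expfa^{-1}$. The real content is additivity, and this is where the Abelian hypothesis enters decisively. I would read it off the commutative diagram \eqref{diagrama fa}: there $\Exp\colon(\R^{N+1},+)\to(\R^{N+1}_{>0},\plus)$ is a group isomorphism and $\fa\colon(\R^{N+1}_{>0},\plus)\to(\simplex,\plusfa)$ is a group homomorphism, so the composite $\fa\circ\Exp=\Expfa\circ D_{\mathbf{x}}\fa(1)$ is a homomorphism from $(\R^{N+1},+)$ into $(\simplex,\plusfa)$. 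Because $D_{\mathbf{x}}\fa(1)$ surjects onto $\tanDelta$ (as the diagram forces, $\Expfa$ being a bijection and $\fa\circ\Exp$ being onto $\simplex$), given $\boldsymbol\xi_1,\boldsymbol\xi_2\in\tanDelta$ I choose preimages ${\bf v}_1,{\bf v}_2$ with $D_{\mathbf{x}}\fa(1){\bf v}_k=\boldsymbol\xi_k$ and chase the diagram: $\Expfa(\boldsymbol\xi_1+\boldsymbol\xi_2)=\fa(\Exp({\bf v}_1+{\bf v}_2))=\fa(\Exp({\bf v}_1))\plusfa\fa(\Exp({\bf v}_2))=\Expfa(\boldsymbol\xi_1)\plusfa\Expfa(\boldsymbol\xi_2)$, which is exactly additivity of $\Expfa$. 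Applying $\Logfa$ yields the additivity of $\Logfa$.

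With both properties in hand the proof concludes formally. Each vector-space axiom for $(\simplex,\plusfa,\orifa)$ is obtained by applying the bijection $\Logfa$, using additivity and homogeneity to push the identity into $(\tanDelta,+,\cdot)$ where it holds, and then using injectivity of $\Logfa$ to pull it back; for instance $(c+d)\orifa\boldsymbol\lambda=(c\orifa\boldsymbol\lambda)\plusfa(d\orifa\boldsymbol\lambda)$ holds because both sides have the common image $c\,\Logfa(\boldsymbol\lambda)+d\,\Logfa(\boldsymbol\lambda)$ under $\Logfa$, and the remaining axioms are checked the same way. The very same additivity and homogeneity say precisely that $\Logfa$ and $\Expfa$ are mutually inverse maps respecting both operations, hence linear isomorphisms between $(\tanDelta,+,\cdot)$ and $(\simplex,\plusfa,\orifa)$. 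I expect the only genuine obstacle to be the additivity step of the second paragraph, since it is the sole place where one must use that the ambient and quotient groups are Abelian: for a non-Abelian group the Lie exponential would fail to be a homomorphism and the transported structure would not be linear.
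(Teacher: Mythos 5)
Your proposal is correct and follows the route the paper intends: the proposition is stated without an explicit proof, and your argument assembles exactly the ingredients the paper has already put in place, namely homogeneity of $\Logfa$ from the definition of $\orifa$, and additivity of $\Expfa$ obtained by chasing diagram \eqref{diagrama fa} using that $\Exp$ and $\fa$ are group homomorphisms, after which the vector-space axioms transfer through the bijection $\Logfa$. Your identification of additivity as the one step with real content, and of the surjectivity of $D_{{\bf x}}\fa({\bf 1})$ onto $\tanDelta$ as the small point needing justification (it also follows from formula \eqref{formula Dx}, whose kernel is spanned by ${\bf a}$), is exactly right.
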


		Having constructed the logarithm map $\Logfa$ as a linear isomorphism and giving a scalar product $\ps{\cdot,\cdot}$ on the Lie algebra $T_{\bf{e_a}}\Delta_{N}^0$, we can induce a scalar product on $(\simplex,\plusfa,\orifa)$ : 
		\begin{equation*}
		    \psa{ \bl ,\boldsymbol{\mu}}= \ps{\Logfa(\bl),\Logfa(\boldsymbol{\mu})}
		\end{equation*}
		and the associated norm $\|\bl\|_{\fa}=\sqrt{\ps{\Logfa(\bl),\Logfa(\bl)}}$.
		\begin{theorem} \label{thm:dista}
		    We have the following parameterized induced distance functions on the simplex, $\distfa: \Delta_{N}^0\times \Delta_{N}^0\longrightarrow\mathbb{R}$ :
		    \begin{equation*}
		        \distfa(\bl,\boldsymbol{\mu}):=\| \bl \minusfa \boldsymbol{\mu}\|_{\fa}. \footnote{$\bl \minusfa \boldsymbol{\mu} := \bl \plusfa \left((-1)\orifa \boldsymbol{ \mu}\right)$ }
		    \end{equation*}
		    
		    \begin{enumerate}[i)]
		        \item These distances are bi-invariant, \textit{i.e.} $\distfa(\boldsymbol{\gamma}\plusfa\boldsymbol{\lambda},\boldsymbol{\gamma}\plusfa\boldsymbol{\mu})=\distfa(\boldsymbol{\lambda},\boldsymbol{\mu})$, for all $\boldsymbol{\gamma,\lambda,\mu} \in \simplex$. 
		        \item The maps $\Logfa$ and $\Expfa$ are linear isometries between   $(\simplex,<\cdot,\cdot>_{\fa})$  and  $(\tanDelta, <\cdot,\cdot>)$.
		        \item Choosing the Euclidean scalar product on $\tanDelta$, we obtain the following formula :
		        	\begin{align*}
	\distfa(\boldsymbol{\lambda},\boldsymbol{\mu})^{2}&=\sum_{i=1}^{N+1}({\bf e}_{\bf a})^2_{i}\left( \ln(\frac{\lambda_i}{\mu_i})-\frac{a_i}{\sum_{k=1}^{N+1}a_k({\bf e_{\bf a}})_k}\sum_{j=1}^{N+1}({\bf e}_{\bf a})_{j}\ln(\frac{\lambda_j}{\mu_j})\right)^2.
	\end{align*}
		    \end{enumerate}
		       
		\end{theorem}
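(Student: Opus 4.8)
The plan is to push the entire problem through the map $\Logfa$ into the Euclidean Lie algebra $\tanDelta$ and read off all three claims there. The single fact that does the work is Proposition \ref{prop : isometry}: the map $\Logfa$ is a \emph{linear} isomorphism $(\simplex,\plusfa,\orifa)\to(\tanDelta,+,\cdot)$, so it is additive and homogeneous. Applying these two properties to $\bl\minusfa\boldsymbol{\mu}=\bl\plusfa\left((-1)\orifa\boldsymbol{\mu}\right)$ gives $\Logfa(\bl\minusfa\boldsymbol{\mu})=\Logfa(\bl)-\Logfa(\boldsymbol{\mu})$, whence, straight from the definition of the induced norm,
\begin{equation*}
\distfa(\bl,\boldsymbol{\mu})=\|\bl\minusfa\boldsymbol{\mu}\|_{\fa}=\sqrt{\ps{\Logfa(\bl)-\Logfa(\boldsymbol{\mu}),\Logfa(\bl)-\Logfa(\boldsymbol{\mu})}}. \tag{$\ast$}
\end{equation*}
This identity is the crux, and the three statements follow as corollaries.

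Claim (ii) then holds essentially by construction: since $\psa{\bl,\boldsymbol{\mu}}$ was \emph{defined} as the pullback $\ps{\Logfa(\bl),\Logfa(\boldsymbol{\mu})}$, the map $\Logfa$ preserves inner products, and its inverse $\Expfa$—a linear isomorphism again by Proposition \ref{prop : isometry}—is therefore a linear isometry as well.

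For (i) I would substitute $\boldsymbol{\gamma}\plusfa\bl$ and $\boldsymbol{\gamma}\plusfa\boldsymbol{\mu}$ into $(\ast)$: additivity of $\Logfa$ cancels the two $\Logfa(\boldsymbol{\gamma})$ contributions and leaves exactly $\distfa(\bl,\boldsymbol{\mu})$. Because $(\simplex,\plusfa)$ is abelian, left and right translation coincide, so this single computation yields bi-invariance. For (iii), with the Euclidean product $(\ast)$ reads $\distfa(\bl,\boldsymbol{\mu})^2=\sum_{i=1}^{N+1}\big(\Logfa(\bl)_i-\Logfa(\boldsymbol{\mu})_i\big)^2$; inserting the coordinate formula for $\Logfa$ from Theorem \ref{thm: log} and using that the bracketed expression is linear in the $\ln\lambda_i$, each difference of $i$-th coordinates collapses $\ln\lambda_i-\ln\mu_i$ into $\ln(\lambda_i/\mu_i)$, producing the announced formula.

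I do not anticipate a genuine obstacle here: the only point requiring care is invoking the \emph{linearity} of $\Logfa$—not merely that it is a Lie logarithm—to justify $(\ast)$, which is exactly the content of Proposition \ref{prop : isometry}. Once $(\ast)$ is in hand, (ii) is definitional, (i) is a one-line cancellation, and (iii) is a substitution. For completeness I would also remark that $\distfa$ is a bona fide metric, inheriting symmetry, non-degeneracy and the triangle inequality from the Euclidean distance on $\tanDelta$ through the linear isometry $\Logfa$.
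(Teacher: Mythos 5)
Your proof is correct and is exactly the argument the paper intends (the theorem is stated without an explicit proof, but everything it relies on is Proposition \ref{prop : isometry} and the pullback definition of $\langle\cdot,\cdot\rangle_{\fa}$). The identity $\Logfa(\bl\minusfa\boldsymbol{\mu})=\Logfa(\bl)-\Logfa(\boldsymbol{\mu})$ via linearity of $\Logfa$, the one-line cancellation for invariance (with commutativity giving bi-invariance), and the substitution of the Theorem \ref{thm: log} formula for part iii) are precisely the steps the authors leave to the reader.
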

The Euclidean scalar product is a  natural choice on $\tanDelta$ because this is an Abelian Lie algebra. Another choice would be the  Killing scalar product but this is zero on an Abelian algebra. Consequently, in what follows, we consider the Euclidean scalar product on $\tanDelta$.	
		
		\medskip

	\medskip
	
	Putting together the previous diagrams, we obtain the following depiction of the constructions we have made so far: 
	
	$$\xymatrix{
		(\mathbb{R}^{N+1}_{>0},\plus)  \ar@<1ex>[rrr]^*{\Log} \ar[d]^*{\pi_{\bf a}} \ar[ddr]|\hole|\hole^>>>>>>*{\fa}	&&& T_{\bf 1}\mathbb{R}^{N+1}_{>0}\ar[lll]^*{\Exp} \ar[d]_*{D_{\bf x}\pi_{\bf a}({\bf 1)}} \ar[ddrr]^*{D_{\bf x}\fa({\bf 1})} & & \\
		(\mathbb{R}^{N+1}_{>0}/H_{\bf a},\diamond_{\bf a}) \ar@<0.5ex>[rrr]^*{\Log_{\bf a}} \ar[dr]_*{s_{\bf a}} &&& T_{[{\bf 1}]}\left( \mathbb{R}^{N+1}_{>0}/H_{{\bf a}}\right) \ar[lll]^*{\Exp_{\bf a}} \ar[drr]_*{\tilde{s}_{\bf a}} &  &\\
		&(\Delta_{N}^0, \plusfa)\ar@<1ex>[rrrr]^*{\Logfa}  && & &  T_{\bf{e_a}}\Delta_{N}^0 \ar[llll]^*{\Expfa}
	}$$
	
	Even though $\Delta_N^0$ is a subset of  $\R^{N+1}_{>0}$, it is not a subgroup of $(\R^{N+1}_{>0},\plus)$ and thus the quotient operation $\plusfa$ is not simply the restriction of $\plus$ to $\Delta_N^0$. This further implies that $\Delta_N^0$ is non-linear (not a linear subspace) in the ambient vector space $(\R^{N+1}_{>0},\plus,\ori)$.
	
	\textcolor{black}{As we have seen so far, the quotient group $\mathbb{R}^{N+1}_{>0}/H_{\bf a}$ exists for any choice of ${\bf a}\in \mathbb{R}^{N+1}$ and we can regard ${\bf a}$ as a parameter for the quotient structures. Nevertheless, only the choices with $a_i>0$ (or $a_i<0$), for all $i\in \overline{1,N+1}$ can accommodate the simplex  $\Delta_N^0$ as a model space. The simple observation that $H_{\bf a}=H_{c\bf a}$, for any scalar $c\in \mathbb{R}$ shows that the quotient structure depends only on the line $[{\bf a}]_{_{\bf 1}}$ and consequently, the quotient structures are parametrized by the real projective space ${\bf RP}^N$. The simplex  $\Delta_N^0$ can be a model space for those quotient structures parameterized by $[{\bf a}]_{_{\bf 1}}\in {\bf RP}^N$ with $a_i>0$ (or $a_i<0$), for all $i\in \overline{1,N+1}$. Everything can be organized in a bundle of groups (Lie groupoid) (see, \cite{groupbundle}) with base ${\bf RP}^N$ but this is beyond the purpose of this paper.}
	
	\textcolor{black}{A difficulty that we encounter is the computation of the neutral element ${\bf{e_a}}\in \Delta_N^0$ for a given quotient structure parametrized by $[{\bf a}]_{_{\bf 1}}$. Nevertheless, we can solve the converse problem noticing that for any $\boldsymbol{\lambda}\in\Delta_N^0$, the class $[(\cdots,ln \lambda_i,\cdots)]_{_{\bf 1}}$ generates the quotient Lie group structure where $\boldsymbol{\lambda}$ is the neutral element. We have the following result.}
	
	\textcolor{black}{
		\begin{lemma}
			The relation between the parameter $[{\bf a}]_{_{\bf 1}}$ with $a_i>0$ (or $a_i<0$), for all $i\in \overline{1,N+1}$ and the neutral element of the quotient Lie group structure it generates on $\Delta_N^0$ is given by the equality
			$$[{\bf a}]_{_{\bf 1}}=[\cdots,\ln\left(({\bf e_a})_i\right),\cdots]_{_{\bf 1}}.$$
	\end{lemma}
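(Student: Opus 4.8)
The plan is to compute the neutral element ${\bf e_a}$ explicitly as a function of ${\bf a}$ and then observe that taking coordinatewise logarithms returns ${\bf a}$ up to a nonzero scalar factor, which is precisely what equality of the projective classes $[\cdot]_{_{\bf 1}}$ asserts. So the whole statement should reduce to one short computation plus a single non-vanishing check.

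First I would recall that ${\bf e_a}=\fa({\bf 1})$ is the unique intersection of the class ${\bf 1}H_{\bf a}$ with $\simplex$. By the construction \eqref{def fa}, writing $t_0:=t_{\bf a}({\bf 1})$ for the real solution of the scalar equation $\sum_{i=1}^{N+1}e^{a_it}=1$ coming from \eqref{implicit eq}, this reads $\mathbf{e_a}=h_{{\bf 1},{\bf a}}(t_0)=(\dots,e^{a_it_0},\dots)$. Existence and uniqueness of $t_0$ are guaranteed, under the standing hypothesis that all $a_i$ share the same sign, by the strict-monotonicity argument of the opening Lemma of this section. Taking the coordinatewise logarithm then gives $\ln\big(({\bf e_a})_i\big)=a_it_0$ for every $i$, that is, the vector identity $(\dots,\ln(({\bf e_a})_i),\dots)=t_0\,{\bf a}$ in $\mathbb{R}^{N+1}$.

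Since the projective class $[\cdot]_{_{\bf 1}}$ is invariant under multiplication by a nonzero scalar, the remaining point — and the only one that needs an argument — is that $t_0\neq 0$. This is where the dimension enters: evaluating the left-hand side of \eqref{implicit eq} at $t=0$ (for $\mathbf{x}=\mathbf{1}$) yields $\sum_{i=1}^{N+1}e^{0}=N+1>1$, so $t=0$ cannot be a solution and hence $t_0\neq 0$. One can be slightly more precise: when $a_i>0$ for all $i$ the map $t\mapsto\sum_i e^{a_it}$ is strictly increasing and equals $N+1$ at the origin, forcing $t_0<0$, and symmetrically $t_0>0$ when all $a_i<0$; either way $t_0\neq 0$.

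With $t_0\neq 0$ in hand, I would conclude
$$[\dots,\ln(({\bf e_a})_i),\dots]_{_{\bf 1}}=[t_0\,{\bf a}]_{_{\bf 1}}=[{\bf a}]_{_{\bf 1}},$$
which is the asserted equality. I do not expect any genuine obstacle here: the argument is a direct substitution, and the sole piece of real content is the non-vanishing of $t_0$, which follows immediately from $N+1>1$ and the monotonicity already established.
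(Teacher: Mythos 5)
Your proof is correct, and it supplies the computation the paper leaves implicit (the lemma is stated there without a written proof, relying on the preceding observation that $[\Log(\boldsymbol{\lambda})]_{_{\bf 1}}$ generates the structure with neutral element $\boldsymbol{\lambda}$ — i.e., the same identity $\Log({\bf e_a})=t_{\bf a}({\bf 1})\,{\bf a}$ read in the other direction). You correctly isolate the one point needing an argument, namely $t_{\bf a}({\bf 1})\neq 0$, which indeed follows from $\sum_i e^{a_i\cdot 0}=N+1>1$.
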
}
	
	\bigskip

	\section{Particular cases}\label{sec : 3}
		We illustrate the above constructions for some basic choices of $\bf{a}$ that allow for explicit computations. We start with the basic case ${\bf{a}}=(1,\ldots,1)$, that gives a new perspective from a Lie group theory point of view on several objects encountered in the framework of compositional data.
	\subsection{Case ${\bf{a}}=(1,\ldots,1)$ \textit{i.e.}, Compositional Data analysis}\label{sec: composition}
	Consider the case $\bf{a}={\bf 1}$, \textit{i.e.}  $a_i=1, i=\overline{1,N+1}$. Then 
	\begin{equation*}\label{defH1}
	H_{\bf a}=H_{\bf 1}=\left\{\Exp(t{\bf 1})=(\dots,e^{t},\dots)|~  t\in \mathbb{R}\right\}.
	\end{equation*}
	 In one of his seminal works \cite{conciseguide}, Aitchison says that “When we say that a problem is compositional we are recognizing that the sizes of our specimens are irrelevant.” This is known as the scale invariance principle. When dealing with classical Compositional Data analysis, this comes down to saying that two specimen vectors  ${\bf{v}},{\bf{w}}\in \R_{>0}^{N+1}$ are equivalent from a compositional point of view iff $\bf{v}=\alpha \bf{w},$ with $\alpha\in \R_{>0}^+$. \textcolor{black}{The above equivalence can be rewritten in terms of proportions as $\dfrac{v_i}{v_{j}}=\dfrac{w_i}{w_{j}}$, for all $i,j\in \overline{1,N+1}$.} This, of course, corresponds to considering \textcolor{black}{equivalence classes generated by $H_{\bf 1}$ and consequently,} the quotient with respect to $H_{\bf 1}$.\\
	 
	In this case it is easy to see that for all $\textbf{x} \in \mathbb{R}^{N+1}_{>0} $, $$t_{\bf{1}}(\textbf{x})=-\ln(\|{\bf x}\|)\quad
	\mbox{ and } \quad \fu(\textbf{x})=\frac{\textbf{x}}{\|{\bf x}\|},$$
	where $\|\cdot\|$ denotes the $L^1$ norm and $t_{\bf{1}}({\bf x})$ is the solution of equation \eqref{implicit eq}.  
The map	$\fu$ is known in the Compositional Data analysis literature as the closure operation.

	The corresponding quotient group structure can be easily deduced: for all $\boldsymbol{\lambda},\boldsymbol{\mu} \in \Delta_N^0$,
	$$\boldsymbol{\lambda}\plusunu \boldsymbol{\mu}= \dfrac{\boldsymbol{\lambda} \plus \boldsymbol{\mu}}{\|\boldsymbol{\lambda} \plus \boldsymbol{\mu}\|} = \fu(\boldsymbol{\lambda}\plus \boldsymbol{\mu}).$$
	This is what Aitchison has defined as the perturbation operation in \cite{Aitchison1982TheSA}.
	
	Furthermore, the neutral element $\bf{e_1}$ is given by $\left(\ldots,\dfrac{1}{N+1},\ldots\right)$.
	Thus, the derivative of $\fu$ given by $\eqref{formula Dx}$ becomes:
	\begin{equation*}
	(D_{{\bf x}}\fu)_{i,j}({\bf 1})=\delta_{i,j}\frac{1}{N+1}-\frac{1}{(N+1)^2}.
	\end{equation*}
	
	From Theorem \ref{thm: log} it follows that, for all $\boldsymbol{\lambda} \in \Delta_{N}^0$:
	
	\begin{align*}
	\Logfunu(\boldsymbol{\lambda})&= \left(\ldots, \frac{\ln(\lambda_i)}{N+1}-\frac{1}{(N+1)^2}\sum_{j=1}^{N+1}\ln \lambda_j , \ldots\right)\\
	&= \frac{1}{N+1} \Log (\boldsymbol{\lambda})+\frac{1}{(N+1)^2}\| \Log (\boldsymbol{\lambda})\| {\bf 1}
	\end{align*}
	In particular, this formula implies that, up to a constant, the well-known centered log-ratio transformation (see for example: \cite{Aitch86}, \cite{GreenacreBook}) can actually be seen as the logarithmic map $\Logfunu$ of the quotient Lie group  $(\simplex,\plusunu)$.  \\
	The associated distance $\distfunu: \Delta_{N}^0\times \Delta_{N}^0\longrightarrow\mathbb{R}_+,$ can be written as:
	$$\distfunu(\boldsymbol{\lambda},\boldsymbol{\mu})^2=\frac{1}{(N+1)^2}\sum_{i=1}^{N+1}\left( \ln \frac{\lambda_i}{\mu_i}- \frac{1}{N+1} \sum_{j=1}^{N+1}\ln \frac{\lambda_j}{\mu_j} \right)^2$$

This distance, sometimes called Aitchison distance, first appeared in \cite{Aitchinson2001} and \cite{Aitch2002}. \\

	For this particular case of ${\bf a}$, we can give an explicit formula for $\Expfunu$. This is facilitated by the property that for all $\boldsymbol \xi \in T_{\bf{e_1}}\Delta_{N}^0 =\{ \boldsymbol{ \eta} \in \mathbb{R}^{N+1} \  | \displaystyle\sum_{i=1}^{N+1} \eta_i=0 \}$, $D\fu({\bf 1}) \boldsymbol{\xi}=\frac{1}{N+1}\boldsymbol\xi $ and thus, the commutativity of the diagram \eqref{diagrama fa} implies that :
	
	\begin{align*}
	\Expfunu(\boldsymbol\xi)&= \Expfunu((N+1)D\fu({\bf 1}) \boldsymbol\xi)\\
	&= \fu\left(\Exp ((N+1)\boldsymbol{\xi}) \right)=\\
	&= \dfrac{\Exp((N+1)\boldsymbol{\xi})}{\| \Exp((N+1)\boldsymbol{ \xi})\|}, \quad \mbox{ for all } \boldsymbol\xi \in  T_{\bf{e_1}}\Delta_{N}^0.
	\end{align*}
	This exponential map is also known as the softmax function encountered in logistic regression and neural networks. 

  To summarise, we give a dictionary between the constructions resulting from the Lie group theory and the ones encountered in compositional data analysis. 
	
	\begin{table}[h]
		\centering
		\begin{tabular}{|c|c|}\hline
			\textbf{  Compositional Data}  &  \textbf{Lie Group Theory}  \\ \hline
			closure  operation   &  $\fu:\R^{N+1}_{>0}\to \Delta_N^0$\\ \hline
			perturbation & $\plusunu :\Delta_N^0\times \Delta_N^0 \to \Delta_N^0$\\ \hline
			power operation & $\oriunu :\mathbb{R}\times \Delta_N^0 \to \Delta_N^0$\\ \hline
			Centered Log Ratio\tablefootnote{This correspondence is up to the constant factor (N+1)} (crl) & $\Logfunu : \Delta_N^0\to  T_{\bf{e_1}}\Delta_{N}^0$ \\ \hline
			crl$^{-1}$ & $\Expfunu:  T_{\bf{e_1}}\Delta_{N}^0\to \Delta_{N}^0$\\ \hline
			Aitchison distance & $\distfunu:\simplex\times\simplex\to R_{+}$\\ \hline 
			
		\end{tabular}
		\label{tab:my_label}
	\end{table}

	\subsection{Case ${\bf a}=(1,\ldots,1,2)$}
	
	To highlight the impact of the reference subgroup $H_{\bf a}$ on the quotient group structure and the afferent objects, let us consider a second example. Set ${\bf a}=(1,\ldots,1,2)$ and consider the associate subgroup:
	$$H_{\bf a}=\left\{{\bf exp}(t{\bf a})=(e^{t},\dots, e^t,e^{2t})|~ t\in \mathbb{R}\right\}.$$  Applying $\eqref{def fa}$ implies, for all ${\bf{x}}\in \mathbb{R}^{N+1}_{>0}$, that the intersection of the equivalence class of ${\bf x}$ with the simplex is given by :
	$\fa({\bf x})=h_{\bf x, \bf a}(t_{\bf a}(\bf x)),$
	where  $t_{\bf a}(\bf x)$ is the solution of the equation
	\begin{equation}\label{eq ta}
	\left(\sum_{i=1}^{N} x_{i}\right)e^{t}+x_{N+1}e^{2t}=1.
	\end{equation}
	Thus $\fa$ can be explicitly computed. Denoting $S_{N}= \sum_{i=1}^{N} x_{i}$, \eqref{eq ta} can be written as: 
	\begin{equation} \label{eq ta y}
	x_{N+1}y^2+S_Ny-1=0, \quad \mbox{ with } y =e^{t}
	\end{equation}
	Hence a solution of $\eqref{eq ta}$ can be obtained from the positive solution of \eqref{eq ta y},
	$$y_{+}=\dfrac{-S_N+\sqrt{S_N^2+4x_{N+1}}}{2x_{N+1}},$$
	implying that $t_{\bf a}({\bf x})= \ln(y_+)$ and consequently :
	$$\fa({\bf x})=\left(\ldots,x_i \dfrac{-S_N+\sqrt{S_N^2+4x_{N+1}}}{2x_{N+1}},\ldots, x_{N+1}\left( \dfrac{-S_N+\sqrt{S_N^2+4x_{N+1}}}{2x_{N+1}}\right)^2 \right).$$
	In this case, the neutral element  can be explicitly computed and we have that : $${\bf e_{ a}}=\fa(\textbf{1})=\left(\frac{-N+\sqrt{N^2+4}}{2},\ldots,\left(\frac{-N+\sqrt{N^2+4}}{2} \right)^2\right).$$
	For all $\boldsymbol{\lambda} \in \Delta_N^0$,  
	$$\left(\Logfa \boldsymbol\lambda\right)_i= \frac{-N+\sqrt{N^2+4}}{2}\ln \lambda_i+\frac{1}{\sqrt{N^2+4}}\| \Log \boldsymbol{\lambda}\|_{\bf e_a}, \quad 1\le i\le N$$
	and $$\left(\Logfa \boldsymbol\lambda\right)_{N+1}=\frac{-N+\sqrt{N^2+4}}{2}\left(\frac{-N+\sqrt{N^2+4}}{2}\ln \lambda_{N+1}+\frac{2}{\sqrt{N^2+4}}\| \Log \boldsymbol \lambda\|_{\bf e_a} \right),$$
	where $\| \cdot\|_{\bf e_a} $ denotes the $L^1$ norm weighted by ${\bf e_a}$.

	Finally, the associated distance is given by :
	\begin{align*}
	\distfa(\boldsymbol{\lambda,\mu})   &=\sum_{i=1}^N  \left(\frac{-N+\sqrt{N^2+4}}{2}\right)^2\left( \ln\frac{\lambda_i}{\mu_i}-\frac{M_N}{\sqrt{N^2+4}} \right)\\
	& +\left(\frac{-N+\sqrt{N^2+4}}{2} \right)^4\left( \ln\frac{\lambda_{N+1}}{\mu_{N+1}}-\frac{2M_N}{\sqrt{N^2+4}}\right)
	\end{align*}
	where $ M_{N}:=\sum_{j=1}^{N}\ln\frac{\lambda_j}{\mu_j}+\frac{-N+\sqrt{N^2+4}}{2}\ln\frac{\lambda_{N+1}}{\mu_{N+1}} $
\section{Generalised compositional data }	\label{sec:4}
As we have seen before, the main concepts and mathematical objects used in compositional data arise naturally in the Lie group theory, when considering the simplex as a space model for the quotient group $\R^{N+1}_{>0}/H_{\bf 1}$. If the nature of the data implies a different link between the components, considering a different equivalence relation,  determined by a general $H_{\bf a}$,  might be a more appropriate choice. There are several principles, introduced by Aitchison, that are considered being fundamental in an large part of the compositional data literature. A detailed historical account of these principles and an analysis of their usefulness when studying data is given in \cite{cocktails}. In what follows, we explain what these principles become in the more general framework when the equivalence relation is generated by $H_{\bf a}$.

\paragraph{Scale invariance} The scale invariance principle used in compositional data corresponds to the equivalence relation determined by $H_{\bf 1}$. We give a straightforward generalisation of this principle when the equivalence class is determined by $H_{\bf a}$. In this case,  two specimens $\bf{v}$ and $\bf{w}$ belong to the same class determined by $H_{\bf a}$, \textit{i.e.}
		$\bf{v} \sim \bf{w} $, iff there exists $\alpha \in \mathbb{R}_{>0}$ such that ${\bf{v}}=(w_1\alpha^{a_1},\ldots,w_{N+1}\alpha^{a_{N+1}})$ \textcolor{black}{or equivalently, $\dfrac{{v_i}^{a_j}}{v_{j}^{a_i}}=\dfrac{{w_i}^{a_j}}{w_{j}^{a_i}}$, for all $i,j\in \overline{1,N+1}$. \\
		If the two specimens are two different instances of a variable $X:\mathbb{R}\rightarrow \R_{>0}^{N+1}$, i.e. ${\bf v}=X(\tau_1)$ and  ${\bf w}=X(\tau_2)$, where $\tau$ is some parameter describing mass, or volume, time, concentration, etc., then the equivalence relation above can be restated as $\dfrac{\delta}{\delta \tau}X(\tau):=X(\tau)^{-1}\dot{X}(\tau)=\bf{a}$. The operation $\dfrac{\delta}{\delta \tau}$ is the logarithmic derivative in the Abelian group $(\R_{>0}^{N+1},\plus)$.}  This leads to considering the quotient structure with respect to $H_{\bf a}$.
	Such situations are encountered in practice, for example, when analyzing the mass evolution of components in a system, see \cite{usesandmisuses}. For instances, when the data comes from an exponential decay dynamic, a situation also described in \cite{usesandmisuses}, the Aitchison geometry (case $\bf{a}=\bf{1}$) might not be the best approach when dealing with general equivalence classes.\\
		Another case when a more general geometry is better adaptive is for data in which some components are measured in $unit$ and other, for example, in $unit^2$ or $unit^3$ (for instance one component that represents a length and another an area or a volume).

			\textcolor{black}{\paragraph{Permutation invariance} This principle refers to the fact that the results of an analysis of the data should be invariant under any permutation of the components, as long as the same permutation is applied to the whole data set. This property still holds for the generalized framework corresponding to any ${\bf a }\in \R^{N+1}_{>0}$, as soon as the quotient structure is adapted accordingly : if a permutation $\sigma$ is used on the components of the data, then the proper quotient structure is the one with respect to $H_{ \sigma({\bf a })}$.}
	
	\paragraph{Subcompositional coherence}

	In what follows, we define a subcomposition procedure in the previously proposed general framework and  show that the subcomposition procedure is a linear map between two vector spaces.
	
	$$ \xymatrix{
		\left(\mathbb{R}^{N+1}_{>0},\oplus,\ori\right)  \ar@<1ex>[rr]^*{\alg} \ar[d]^*{\fa}	&& \left(\mathbb{R}^{k+1}_{>0}, \oplus,\ori \right) \ar[d]^*{\fap} \\
		\left(\Delta_N^0,\plusfa,\orifa\right) \ar@<1ex>[rr]^*{\sub} &&\left(\Delta_k^0,\oplus_{\fap},\ori_{\fap} \right)  }$$
	
	We denote by $\alg:\mathbb{R}^{N+1}_{>0}\to\mathbb{R}^{k+1}_{>0}$ the map that assigns to each ${\bf x}=(x_1,\ldots,x_{N+1})\in \mathbb{R}^{N+1}_{>0} $ a sub-part of $k+1$ elements $(x_{\sigma_1},\ldots,x_{\sigma_{k+1}})$ (not necessarily in the initial order) and set ${\bf a}':=\alg({\bf a})$. Then, for $\boldsymbol{\lambda}\in \Delta_{N}^0$ we can define the subcomposition corresponding to the $k+1$ elements chosen by $\alg$ as : 
	\begin{equation*}
	\sub(\boldsymbol{\lambda}):=\fap(\alg(\boldsymbol{\lambda})).
	\end{equation*}
	The well definiteness of the operator is a consequence of the fact that $\forall {\bf x}\in  \mathbb{R}^{N+1}_{>0} $, $\fap(\alg(\bx))=\fap(\alg(\fa(\bx)))$.
	Indeed, let $\bx\in \R^{N+1}_{>0}$. Then $\fa(\bx)=(x_1e^{\tax a_1},\ldots,x_{N+1}e^{\tax a_{N+1}} )$, where $\tax$ is the solution of the equation \eqref{implicit eq} and 
	\begin{align*}
	\alg(\fa(\bx)) &=(x_{\sigma_1}e^{\tax a_{\sigma_{1}}},\ldots,x_{\sigma_{N+1}}e^{\tax a_{\sigma_{N+1}}}) \\
	&=(x_{\sigma_1}e^{\tax a_1'},\ldots,x_{\sigma_{N+1}}e^{\tax a_{N+1}'}) \in \alg(\bx)H_{\bf a'}.
	\end{align*}
	In other words, we have that $\alg(\fa(\bx))$ is in the same equivalence class as $\alg(\bx)$ with respect to $H_{\bf a'}$ and thus by definition $\fap(\alg(\fa(\bx)))=\fap(\alg(\bx))$.

	The fact that $\sub$ is a linear map is now an immediate consequence of this last equality and the fact that $\fap$ and $\alg$ are themselves linear maps.

 Subcompositional coherence principle is generally described as the fact that the results obtained for a sub-part of a compositional data (studied after closure) should remain coherent with the results obtained for the full composition. A classical example of subcompositional incoherence, shows that when compositional data framework is not taken into account, we can obtain a positive correlation between two variables when they are studied as composites of a specimen with $n$ components and a negative correlation when the same variables are studied on a (reclosed) subcomposition of the same original specimen (see for example \cite{Aitchinson2001}). Of course, not all results obtained on a compositional data are preserved by the $\sub$ operator. For example, the arithmetic mean on the simplex is subcompositional coherent in the sense that : $\sub\left(\frac{1}{n}\orifa\left( {\bl_1 \plusfa\ldots\plusfa\bl_n}\right)\right)= \frac{1}{n}\ori_{\bf a'}\left(\sub\left({\bl_1}) \plus_{\fap}\ldots\plus_{\fap}\sub({\bl_n}\right)\right).$ 
	\section{Elements of descriptive statistics on the simplex and normal distribution}\label{sec:5}
	\paragraph{Centrality for generalized compositional data}
	\textcolor{black}{	Once we endow the simplex with a distance $\distfa$, a natural first notion of centrality that can be considered is the corresponding Fréchet mean. Fréchet means were introduced by M Fréchet in   \cite{frechet} and represent a generalization of the classical expected value to arbitrary metric spaces.
 For a random variable $X$, defined on a metric space $(\mathcal{M},d)$, a Fréchet mean is a minimizer of $ y \mapsto \mathbb{E}\left[ d^2(y,X)\right]$. When there exists a unique minimizer $m(X)$, we say that $m(X)$ is the Fréchet mean of $X$ with respect to $d$.
	  Aitchison used this notion to define the geometric mean as a measure of central tendency (see for example \cite{conciseguide}) for compositional data.
	A similar result can be obtained in a straightforward way for a more general distance $\distfa$. Let $X$ be a random variable on $\simplex$. Then a Fréchet mean of $X$ with respect to $\distfa$ is a minimizer of the real valued functional:  
	\begin{equation}\label{FrechetMean}
	\boldsymbol{\lambda} \mapsto \mathbb{E}\left[ \distfa^2(\boldsymbol{\lambda},X)\right].
	\end{equation}
	Now, for any $ \boldsymbol{\lambda,\mu}\in\simplex$, the definition of the distance $\distfa$ implies that $\distfa(\boldsymbol{\lambda,\mu})=d(\Logfa(\boldsymbol{\lambda}),\Logfa(\boldsymbol{\mu}))$. We also have that, for all $\boldsymbol{\lambda}\in \simplex$, $\mathbb{E}[\distfa^2(\boldsymbol{\lambda},X)]=\mathbb{E}[d^2(\Logfa(\boldsymbol{\lambda}),\Logfa(X)]$, thus a minimizer of \eqref{FrechetMean} is a Fréchet mean of $\Log_a(X)$ and therefore, as soon as $\Logfa(X)$ is integrable, we have that $m_a(X)$, the Fréchet mean of $X$ with respect to $\distfa$,  is unique and that :
	\begin{equation*}
	m_{\bf a}(X)=\Expfa\left(\mathbb{E}\left[\Logfa(X)\right]\right).
	\end{equation*}
	This last result can also be seen as a consequence of Proposition 1 of \cite{GeometricAproachPawlosky-Egozcue2001}  since  $\Logfa$ is an isometry.}
	
		Since the simplex is endowed with additional mathematical structure we can also construct the expected value of a random variable. Let ${\bf a }\in \mathbb{R}_{>0}^{N+1}$ and denote $\mathcal{B}_{\bf a}=\Expfa(\mathcal{B})$, where $\mathcal{B}$ is the Borel $\sigma$-algebra of $T_{e_{\bf a}}\Delta_{N}^0$. $\mathcal{B}_{\bf a}$ is also the smallest $\sigma$-algebra containing all the open balls associated to the distance $\distfa$. Following the same procedure as \cite{GeometricAproachPawlosky-Egozcue2001}, for each random variable $X :(\Omega,\mathcal{F},\mathbb{P})\to (\Delta_{N}^0, \mathcal{B}_{\bf a})$, we can consider the real valued random vector  $\Logfa(X) :(\Omega,\mathcal{F},\mathbb{P})\to (T_{e_{\bf a}}\Delta_{N}^0, \mathcal{B})$ and obtain that :
	
\begin{equation*}
\mathbb{E}_{\bf a }[X]=\Expfa\left( \mathbb{E}[\Logfa(X)]\right).
\end{equation*}	
	It is well known that in $\R^N$, any integrable random variable $Z$ (\textit{ie} for which $\mathbb{E}[Z]$ exists and is well defined) has a unique Fréchet mean with respect to the euclidean distance $d$ and that $m(Z)=\mathbb{E}[Z]$. This property holds here too, in the sense that when $\mathbb{E}_{\bf a }[X]$ is well defined,  $m_{\bf a }(X)=\mathbb{E}_{\bf a }[X]$.
	\paragraph{PCA}
	
	\textcolor{black}{Proposition \ref{prop : isometry} implies in particular that $\Logfa$ can be seen as the equivalent of crl map for generalized compositional data.
		With this is mind, all the statistical methods based on log-ratios used for compositional data can be extended in a straightforward way to generalized compositional data. Take for example principal component analysis (PCA). This is a well known multivariate statistical analysis method, often used for dimensionality reduction. The main idea of PCA is to synthesize a data set consisting in a large number of correlated variables, while retaining as much as possible the variation present in
		it. To achieve this, PCA computes new uncorrelated variables, called principal components. For more details see \cite{joliffe}.}
		
		\textcolor{black}{In an Euclidean framework, PCA can be reduced to the study of eigenvalues and eigenvectors of a positive-semidefinite symmetric matrix, and is generally treated in this manner.
		In the context of generalized compositional data, principal component analysis can be applied on the matrix obtained through a $\Logfa$ transformation of the data set. This is the equivalent of logratio analysis used for classical compositional data.}
		
	\textcolor{black}{	To highlight the link between the PCA on the image of a data set through $\Logfa$ and an analysis directly on the simplex, it is convenient to look at the variational formulation of the PCA.}
		
\textcolor{black}{Indeed, principal components can be described as a sequence of nested affine sub-spaces of increasing dimension that maximize the variance of the projections or minimize the sum of norms of projection residuals. For example, consider the case of $m$ points ${\bf x}_1,\ldots {\bf x}_m$, in $\mathbb{R}^n$ and for all ${\bf v}\in\mathbb{R}^n$ denote: 
		\begin{equation*}
		S_{\bf v}=  \{{\bf \bar{x}}_m+t{\bf v}, t\in \mathbb{R}\}, \mbox{ where  } {\bf \bar{x}}_m=\frac{1}{m}\sum_{i=1}^m {\bf x}_i 
		\end{equation*}  
		A first principal component of the data can be described as $S_{{\bf v}_1}$, where ${\bf v}_1$ is such that:
		\begin{equation*}
		{\bf v}_1 \in  \argmin\limits_{\|{\bf v}\|=1} \sum_{i=1}^m d^2({\bf x}_i,S_{\bf v}).
		\end{equation*}
	This formulation is at the root of several generalizations of PCA to non-euclidean framework (like  on the  Wasserstein space of probability measures over $\mathbb{R}$ in \cite{Bigot}, for a survey of methods employed in Riemannian settings see \cite{axel}). }

Now consider the equivalent objects on $(\Delta_{N}^0,\plusfa,\orifa)$. Suppose we have $\bl_{1},\ldots\bl_{m}\in \Delta_{N}^0$ and for all $\boldsymbol{\mu}\in \Delta_{N}^0$ denote :
\begin{equation*}
S_{\boldsymbol{\mu}}=\{ \bar{\bl}_m\plusfa t\orifa \boldsymbol{\mu}, \ t \in \mathbb{R} \}, \mbox{ with } \ \bar{\bl}_m=\frac{1}{m}\orifa\left[\bl_1 \plusfa\ldots\plusfa \bl_m\right]
\end{equation*}
A first principal component becomes $S_{\boldsymbol{\mu}_1}$, with $\boldsymbol{\mu}_1$ such that :
	\begin{equation*}
{\boldsymbol{\mu}_1} \in  \argmin\limits_{\|{\boldsymbol{\mu}}\|_{_{\fa}}=1} \sum_{i=1}^m d_{\fa}^2({\boldsymbol{\mu}}_i,S_{\boldsymbol{\mu}}).
\end{equation*}
For $i\in\{1,\ldots m\}$, set ${\bf x}_i= \Logfa(\bl_i)$. Let $\boldsymbol{\mu} \in \Delta_N^0$ and denote ${\bf v }=\Logfa(\boldsymbol{\mu})$. Using the previous notations and the definition of the distance given by Theorem \ref{thm:dista} we have that :
 \begin{equation*}
 d_{\fa}^2({\boldsymbol{\mu}}_i,S_{\boldsymbol{\mu}}) = d^2({\bf x}_i,S_{\bf v}).
 \end{equation*}
Keeping in mind that $\|{\boldsymbol{\mu}}\|_{_{\fa}}=\| \Logfa(\boldsymbol{ \mu})\|=\|{\bf{v}}\|$, we have that 
\begin{equation*}
{\boldsymbol{\mu}_1} \in  \argmin\limits_{\|{\boldsymbol{\mu}}\|_{_{\fa}}=1} \sum_{i=1}^m d_{\fa}^2({\boldsymbol{\mu}}_i,S_{\boldsymbol{\mu}}) \Longleftrightarrow \Logfa(\boldsymbol{\mu}_1) \in \argmin\limits_{\|{\bf v }\|=1} \sum_{i=1}^m d^2({\bf x}_i,S_{\bf v })
\end{equation*}

And thus,  $S_{\boldsymbol{ \mu}}$ is a first principal component for $\bl_{1},\ldots\bl_{m}\in \Delta_{N}^0$ if and only if $S_{\Logfa(\boldsymbol{ \mu})} $ is a principal component for $\Logfa( \bl_{1}),\ldots \Logfa(\bl_{m})$.

This argument can be easily extended to principal components of higher order, showing thus that applying  PCA on the data transformed through $\Logfa$ is the same as using it directly on the simplex with the appropriate operations.

	\paragraph{Normal distribution on the simplex}
	The vector structure also facilitates the definition of a Gaussian vector on the simplex.
	We can apply for example the definition of a Gaussian vector on a general vector space given by \cite{MorrisEaton} 
	to define a  normal law on the simplex with respect with the structure corresponding to $H_{\bf a }$.
	\begin{definition}\label{def:normal}
		We say that a random vector $X :(\Omega,\mathcal{F},\mathbb{P})\to (\Delta_{N}^0, \mathcal{B}_{\bf a},\plusfa,\orifa)$ has a normal distribution on the simplex  iff, for all $\bl\in\simplex$,  $\psa{\bl,X}=$ $ \langle \Logfa(\bl),\Logfa(X)\rangle$  has a normal distribution on $\mathbb{R}$, \textit{i.e.} $\psa{\bl,X}\sim \mathcal{N}(\mu_{\bl},\sigma_{\bl}^2)$ for some  $\mu_{\bl},\sigma_{\bl}\in \mathbb{R}$.
	\end{definition}
Let $B=({\bf b}_1,\ldots,{\bf b}_N)$ be an orthonormal basis of $(\simplex,\plusfa,\orifa)$. Then $X$ is a normal random vector iff there exists a real multivariate normally distributed vector $Z=(Z_1,\ldots,Z_N)$ such that

 \begin{equation}\label{eq : X normal comb}
 X=Z_1\orifa{\bf b}_1\plusfa\ldots\plusfa Z_N\orifa{\bf b}_N.
 \end{equation}

To see this, first observe that $X=\sum_{i=1}^{N}\psa{{\bf b}_i,X}\orifa {\bf b}_i$\footnote{Here the sum is considered with respect to operation $\plusfa$}. Thus if Definition \ref{def:normal} holds then there exist $\mu_i,\sigma_i\in \mathbb{R}$ such that $\psa{{\bf b}_i,X}\sim\mathcal{N}(\mu_i,\sigma_i^2)$. Denote $Z_i=\psa{{\bf b}_i,X}$ and $Z=(Z_1,\ldots Z_N)$. 
Let ${\bf c}\in \mathbb{R}^{N}$ and set $\bl=c_1\orifa {\bf b}_1\plusfa\ldots\plusfa c_N\orifa {\bf b}_N \in \simplex$. Since $B$ is an orthonormal basis, we have that  $\psa{X,\bl} = \sum_{i=1}^{N}c_iZ_i$.  
  According to Definition \ref{def:normal}, this last term is Gaussian. Therefore we can conclude that any linear combination of components of $Z$ is Gaussian and thus $Z$ is a multivariate normal vector. 

Conversely if equality \eqref{eq : X normal comb} holds, then for all $\bl \in \simplex,$ 

\begin{align*}
\psa{\bl,X} &= \ps{\Logfa(\bl),\Logfa(X)}= \ps{\Logfa(\bl),\sum_{i=1}^{N}Z_i\Logfa({\bf b}_i)}\\
&=\sum_{i=1}^{N}Z_i\ps{\Logfa(\bl),\Logfa({\bf b}_i)}.
\end{align*}
This shows that $\psa{\bl,X} $ is a linear combination of the components of a real normal vector and thus has a normal distribution. \\
When $X$ is defined by $\eqref{eq : X normal comb}$, its expected value can be written as:
\begin{align*}
\mathbb{E}_{\bf a}[X]&=\mathbb{E}_{\bf a}\left[ Z_1\orifa{\bf b}_1\plusfa\ldots\plusfa Z_N\orifa{\bf b}_N \right] \\
&=\mathbb{E}[Z_1]\orifa{\bf b}_1\plusfa\ldots\plusfa \mathbb{E}[ Z_N]\orifa{\bf b}_N, \\
\end{align*}
and so the coordinates of $\mathbb{E}_{\bf a}[X]$ in the basis $B$ are given by $\mathbb{E}[Z]$. We can also define the covariance matrix of $X$ with respect to the basis $B$ as $(\Sigma_B)_{ij}=\mathrm{Cov}(Z_i,Z_j)$.

Considering the special case, when $Z_1,\ldots Z_N$ are $\mathcal{N}(0,1)$ i.i.d. real random variables, 
we have that:

\begin{align*}
\mathbb{E}_{\bf a}[X]&=\mathbb{E}_{\bf a}\left[ Z_1\orifa{\bf b}_1\plusfa\ldots\plusfa Z_N\orifa{\bf b}_N \right] ={\bf e_a}
\end{align*}

and the covariance matrix of $X$ with respect to the basis $B$ is $I_{N}$.

\begin{figure}[htb!]
	\hfill
	\subfigure[Case $a=(1,1,1)$]{\includegraphics[width=5cm]{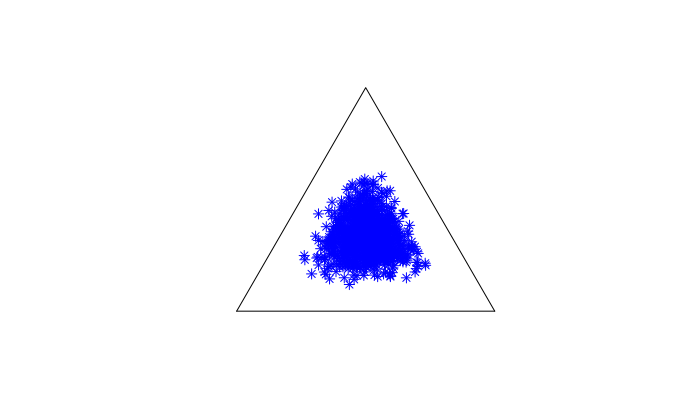}}
	\hfill
	\subfigure[Case $a=(1,1,2)$]{\includegraphics[width=5cm]{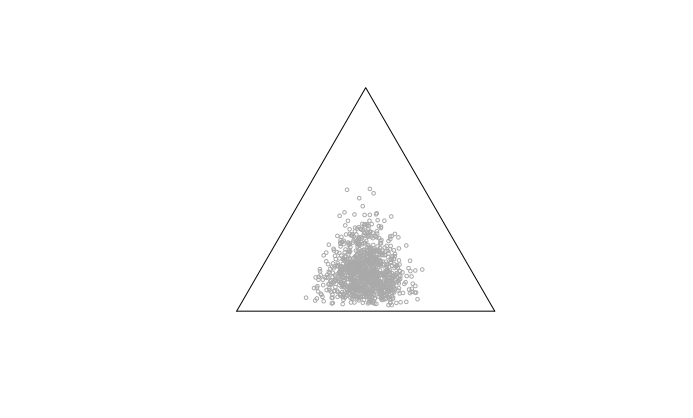}}
	\hfill
	\caption{Simulation of $1000$ realizations of centered normal random variables on the simplex endowed with two different structures }
\end{figure}

Let $\mu_B,\Sigma_B$ be the parameters of a normal random vector $X$ with respect to a fixed orthonormal basis $B$ and suppose that $\Sigma_B$ is non-degenerate (of rank $N$). Denote $T:\simplex\to \mathbb{R}^N$ the bijective linear map that associates to each $\bl \in \simplex$ its coordinates in the basis $B$. Define the measure $\nu_T$ on the Borel sets $A \in \mathcal{B}_{\bf a}$, by $\nu_T(A)=\nu (T(A))$, where $\nu$ denotes the Lebesgue measure on $\mathbb{R}^N$. Then the density function of $X$ with respect to $\nu_T$ is given by: 
\begin{equation*}
f_{_X}(\bl)=\sqrt{(2\pi)^N \det(\Sigma_B)} \exp\left(-\dfrac{1}{2}(T(\bl)-\mu_B)'\Sigma_B^{-1}(T(\bl)-\mu_B)\right)
\end{equation*}
One can see that $f_{_X}(\bl)=f_{_Z}(T(\bl))$, where $Z\sim\mathcal{N}(\mu_B,\Sigma_B)$ is a real valued random vector and $f_{_Z}$ is its density with respect to the Lebesgue measure on $\mathbb{R}^N$. For more details about the density function of a normal random vector on a general vector space, see \cite{MorrisEaton}.

For ${\bf a}={\bf 1}$, setting $T$  as the $\mathrm{ilr}$ transformation defined in \cite{distributionsOnSimplex}, we obtain the same density function as for the normal distribution defined in \cite{distributionsOnSimplex}. The authors of the same paper also highlight that the normal law on $\simplex$, corresponding to this density function is equivalent in terms of probabilities to the additive logistic normal law given by Aitchison in \cite{Aitch86}.

	\section{ Appendix}
		
		\medskip
		
		\textcolor{black}{We give a proof for the commutativity of Diagram (\ref{diagrama fa}). First, we recall the necessary theoretical elements from Marsden and Ratiu \cite{mads}}.

		\textcolor{black}{Let $(G,\cdot)$ be a Lie group and $\mathfrak{G}$ its Lie algebra. One can regard the group multiplication as the left action of the group to itself, $g\cdot h=L_g(h)$. The corresponding infinitesimal action is defined by $(\boldsymbol{\xi},g)\rightarrow \boldsymbol{\xi}_L(g):=DL_g(e_{_G}) ~\boldsymbol{\xi}$, where $DL_g(e_{_G})$ is the differential of the left translation computed at the neutral element $e_{_G}\in G$.}
		
		\textcolor{black}{The exponential map $Exp_{_G}:\mathfrak{G}\rightarrow G$ is defined by $$Exp_{_G}(\boldsymbol{\xi}):=\gamma_{_{\boldsymbol{\xi}}}(1),$$ where $\gamma_{_{\boldsymbol{\xi}}}(1)$ is the solution at time $t=1$ for the system of differential equations $\dot\gamma_{_{\boldsymbol{\xi}}}(t)= \boldsymbol{\xi}_L(\gamma_{_{\boldsymbol{\xi}}}(t))$ with initial condition $\gamma_{_{\boldsymbol{\xi}}}(0)=e_{_G}$. }
		
		\textcolor{black}{For the case of Lie group $(\Delta_{N}^0,\plusfa)$, the left translation is defined by the maps composition $$L_{\boldsymbol{\lambda}}^{^{\Delta_{N}^0}}\circ \fa=\fa\circ L_{\boldsymbol{\lambda}}^{^{\mathbb{R}^{N+1}_{>0}}}.$$  }
		
		By differentiation, we obtain the operator equality,
		$$DL_{\boldsymbol{\lambda}}^{^{\Delta_{N}^0}}({\bf e}_{{\bf a}})~\circ~D\fa({\bf{1}})=D\fa(\boldsymbol{\lambda})~\circ~DL_{\boldsymbol{\lambda}}^{^{\mathbb{R}^{N+1}_{>0}}}(\bf{1})$$
		
		and consequently, for any $\boldsymbol{\xi}\in T_{\bf{e_a}}\Delta_{N}^0$, we have the following computation
		
		\begin{align*}
		\boldsymbol{\xi}_L^{^{\Delta_{N}^0}}&=DL_{\boldsymbol{\lambda}}^{^{\Delta_{N}^0}}({\bf e}_{{\bf a}}) \boldsymbol{\xi}=DL_{\boldsymbol{\lambda}}^{^{\Delta_{N}^0}}({\bf e}_{{\bf a}})\left(D\fa({\bf{1}}) {\bf v}_{\boldsymbol{\xi}}\right)\\
		&=D\fa(\boldsymbol{\lambda})\left(DL_{\boldsymbol{\lambda}}^{^{\mathbb{R}^{N+1}_{>0}}}(\bf{1}){\bf v}_{\boldsymbol{\xi}} \right)\\
		&=D\fa(\boldsymbol{\lambda})\left({\bf v}_{\boldsymbol{\xi}}\right)_L^{^{\mathbb{R}^{N+1}_{>0}}},
		\end{align*}
		where ${\bf v}_{\boldsymbol{\xi}}\in T_{\bf 1}\mathbb{R}^{N+1}_{>0}$ such that $\boldsymbol{\xi}=D\fa({\bf{1}}){\bf v}_{\boldsymbol{\xi}}$.This shows that the left invariant vector fields on $(\Delta_{N}^0,\plusfa)$ are the push forward of the left invariant vector fields on $(\mathbb{R}^{N+1}_{>0},\plus)$. Using the definition of the exponential map, we obtain the commutativity of Diagram \ref{diagrama fa}.

	 \bibliographystyle{plain}

	\bibliography{Simplex} 	
		
	\end{document}